\newtheorem{theorem}{Theorem}
\newtheorem{definition}{Definition}
\newtheorem{remark}{Remark}
\newtheorem{proposition}{Proposition}
\newtheorem{example}{Example}
\newtheorem{lemma}{Lemma}
\newtheorem{corollary}{Corollary}
\title{Integral inequalities for s-convexity via generalized fractional integrals on fractal sets}
\author{Ohud Almutairi$^{1}$ and Adem Kiliçman$^{2}$  \\Department of Mathematics, University of Hafr Al-Batin, Hafr Al-Batin 31991,Sudia Arabia.\\ Universiti Putra Malaysia, Serdang 43400 UPM, Selangor, Malaysia\\ $^{1}$AhoudbAlmutairi@gmail.com	and $^{2}$akilic@upm.edu.my}
\date{\today}
\begin{document}
	\maketitle
\noindent \textbf{Abstract}.
In this study, we establish a new integral inequalities of Hermite-Hadamard type for $s$-convexity 
 via Katugampola fractional integral. This generalizes the Hadamard fractional integrals and Riemann–Liouville into a single form. We show that the new integral inequalities of Hermite–Hadamard type can be obtained via the Riemann–Liouville fractional integral. Finally, we give some applications to special means.\\
\noindent \textbf{Keywords}:Katugampola fractional integral; $s$-convex functions; Hermite–Hadamard inequality; fractal sets.

\section{Introduction}
Fractional calculus, whose applications can be found in many fields of studies including economics, life and physical sciences, as well as engineering, can be considered as one of the modern branches of mathematics \cite{He,Tarasov2019,Dragomir2003} and \cite{Tarasov2018}. Numerus problems of interests from these fields can be analyzed through the fractional integrals, which can also be regarded as an interesting sub-discipline of fractional calculus.  Some of the applications of integral calculus can be seen in the following papers \cite{Dragomir2018,Kirmaci2004,Ozdemir2003,Agarwal}, through which problems in physics, chemistry and population dynamics were studied.
The fractional integrals were extended to include the Hermite-Hadamard type inequalities, which are classically given as follows.

Consider a convex function,  $\psi:E\subseteq\mathbb{R}\to\mathbb{R}$ , $u,v\in E$ with $u\leq v$, if and only if,
\begin{flalign}
\psi\bigg(\frac{u+v}{2}\bigg)\leq\frac{1}{v-u}\int_{u}^{v}\psi(x)dx\leq\frac{\psi(u)+\psi(v)}{2}\label{hh}.
\end{flalign}

Following this, many important generalizations of Hermite-Hadamard inequality were studied \cite{Dragomir1999,DragomirS2018,ozcan2019,almutairi,Dragomir1998,almutairi2019}, some of which were formulated via generalized $s$-convexity, which is defined as follows.

\begin{definition}
 The function $\psi :[u, v] \subset \mathbb{R}_{+} \rightarrow \mathbb{R}^{\alpha}$ is said to be generalized $s$-convex in the second sense if
	\begin{equation*}
	\psi\left(t u+\left(1-t\right) v\right) \leq\left(t\right)^{\alpha s} \psi\left(u\right)+\left(1-t\right)^{\alpha s} \psi\left(v\right).
	\end{equation*}
	This class of function is denoted by $GK_s^{2}$ \rm{(see \cite{Mo})}.
\end{definition}

In \cite{mehreen} Mehran and Anwar studied the Hermite-Hadamard-type inequalities for s-convexity involving generalized fractional integrals. The following definitions of the generalized fractional integrals were given in \cite{Katugampola}.
	\begin{definition}\label{def}
		Suppose $[u, v] \subset \mathbb{R}$ is a finite interval. For order $\alpha >0$, the tow sides of
		Katugampola fractional integrals for $\psi \in X_{c}^{p}(u, v)$ are defined by
		\begin{equation*}
		^\rho I_{u^+}^{\alpha} \psi(x)=\frac{\rho^{1-\alpha}}{\Gamma(\alpha)} \int_{u}^{x}\left(x^{\rho}-t^{\rho}\right)^{\alpha-1} t^{\rho-1} \psi(t) d t,
		\end{equation*}
		and
		\begin{equation*}
		^\rho I_{v^-}^{\alpha} \psi(x)=\frac{\rho^{1-\alpha}}{\Gamma(\alpha)} \int_{x}^{v}\left(t^{\rho}-x^{\rho}\right)^{\alpha-1} t^{\rho-1} \psi(t) d t.
		\end{equation*}
	\end{definition}

\vspace{3 mm}

When improving the results in \cite{mehreen}, Mehran and Anwar used Definition \ref{def} together with the following lemma.

\begin{lemma}\label{lem}
	Suppose that $\psi: \left[u^{\rho}, v^{\rho}\right] \subset \mathbb{R}_{+} \rightarrow \mathbb{R}$ is a differentiable mapping on $\left(u^{\rho}, v^{\rho}\right)$, where $0 \leq u<v$ for $\alpha>0$ and $\rho>0$. If the fractional integrals exist, we get
	\begin{flalign*}
	\frac{\psi\left(u^{\rho}\right)+\psi\left(v^{\rho}\right)}{2}-\frac{\rho^{\alpha} \Gamma(\alpha+1)}{2\left(v^{\rho}-u^{\rho}\right)^{\alpha}}\left[^{\rho} I_{u+}^{\alpha} \psi\left(v^{\rho}\right)+^{\rho} I_{v-}^{\alpha} \psi\left(u^{\rho}\right)\right]\nonumber\\=\frac{\rho\left(v^{\rho}-u^{\rho}\right)}{2} \int_{0}^{1}\left[\left(1-t^{\rho}\right)^{\alpha}-\left(t^{\rho}\right)^{\alpha}\right] t^{\rho-1} \psi^{\prime}\left(t^{\rho} u^{\rho}+\left(1-t^{\rho}\right) v^{\rho}\right) d t.
	\end{flalign*}
\end{lemma}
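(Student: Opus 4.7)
My plan is to work the claimed identity from right to left via integration by parts, then absorb the resulting integrals back into the Katugampola operators. First I would split the bracket $[(1-t^\rho)^\alpha-(t^\rho)^\alpha]$ and write the right-hand side as $\frac{\rho(v^\rho-u^\rho)}{2}(J_1-J_2)$, where $J_1$ collects the $(1-t^\rho)^\alpha$ piece and $J_2$ the $(t^\rho)^\alpha$ piece.

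Next I would introduce the substitution $s=t^\rho$, which collapses the awkward factor via $t^{\rho-1}\,dt=\tfrac{1}{\rho}\,ds$ and converts $J_1,J_2$ into the classical-looking forms
\begin{equation*}
J_1=\frac{1}{\rho}\int_0^1(1-s)^\alpha\psi'\bigl(su^\rho+(1-s)v^\rho\bigr)ds,\qquad J_2=\frac{1}{\rho}\int_0^1 s^\alpha\psi'\bigl(su^\rho+(1-s)v^\rho\bigr)ds.
\end{equation*}
On each I would integrate by parts using $\frac{d}{ds}\psi(su^\rho+(1-s)v^\rho)=(u^\rho-v^\rho)\psi'(\cdots)$. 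The boundary terms at $s=0$ and $s=1$ produce $\psi(v^\rho)$ from $J_1$ and $-\psi(u^\rho)$ from $J_2$; after multiplication by $\tfrac{\rho(v^\rho-u^\rho)}{2}$ these combine into precisely the $\tfrac{\psi(u^\rho)+\psi(v^\rho)}{2}$ term on the left. What remains are $\alpha$-weighted integrals of $\psi$ against $(1-s)^{\alpha-1}$ and $s^{\alpha-1}$.

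The last step is to recognize those two weighted integrals as Katugampola fractional integrals. Applying the change of variable $r=su^\rho+(1-s)v^\rho$ (equivalently $r=y^\rho$ with $y\in[u,v]$), the integral with weight $s^{\alpha-1}$ produces the kernel $(v^\rho-r)^{\alpha-1}$, so it matches ${}^\rho I_{u+}^\alpha\psi(v^\rho)$ up to the prefactor $\tfrac{\rho^\alpha\Gamma(\alpha)}{(v^\rho-u^\rho)^\alpha}$; the weight $(1-s)^{\alpha-1}$ produces the kernel $(r-u^\rho)^{\alpha-1}$, which lands on ${}^\rho I_{v-}^\alpha\psi(u^\rho)$ with the same prefactor. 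Using $\Gamma(\alpha+1)=\alpha\Gamma(\alpha)$ to absorb the $\alpha$ carried out of the integration by parts then reproduces the factor $\tfrac{\rho^\alpha\Gamma(\alpha+1)}{2(v^\rho-u^\rho)^\alpha}$ in the lemma.

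The hardest part will be the bookkeeping in that final step: the Jacobian of $s\mapsto r$ contributes $(v^\rho-u^\rho)^{-1}$, the weight factors contribute another $(v^\rho-u^\rho)^{-(\alpha-1)}$, and the Katugampola normalization $\rho^{1-\alpha}/\Gamma(\alpha)$ from Definition \ref{def} has to combine cleanly with the $1/\rho$ picked up from $t^{\rho-1}dt=\tfrac{1}{\rho}ds$. One also has to be careful with orientation: the substitution reverses $[0,1]$ to $[v^\rho,u^\rho]$, and the resulting sign from $u^\rho-v^\rho$ must cancel against the sign coming out of the boundary evaluation so that $(1-s)^{\alpha-1}$ truly matches the \emph{right}-sided operator ${}^\rho I_{v-}^\alpha$ (not ${}^\rho I_{u+}^\alpha$). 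Any isolated sign slip here would flip one of the two fractional-integral terms and destroy the identity.
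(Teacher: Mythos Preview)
Your approach is correct and is exactly the standard one: pass to $s=t^\rho$, integrate by parts against $\tfrac{d}{ds}\psi(su^\rho+(1-s)v^\rho)$, and then identify the leftover $(1-s)^{\alpha-1}$ and $s^{\alpha-1}$ weighted integrals with ${}^\rho I_{v^-}^{\alpha}\psi(u^\rho)$ and ${}^\rho I_{u^+}^{\alpha}\psi(v^\rho)$ respectively. Your bookkeeping comments about signs, the $(v^\rho-u^\rho)^{-\alpha}$ factor, and the use of $\Gamma(\alpha+1)=\alpha\Gamma(\alpha)$ are all on point.

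One comparison worth making: the present paper does \emph{not} actually prove Lemma~\ref{lem}. It is quoted in the introduction as a known identity from Mehreen and Anwar \cite{mehreen}, and the paper only uses it as input for Theorems~\ref{dth}--\ref{enth}. So there is no ``paper's own proof'' to match against; what you outlined is essentially the argument in the original reference. The only thing you might tighten is the last identification step: in this paper's conventions the Katugampola operator is applied to $x\mapsto\psi(x^\rho)$ (cf.\ the computation in the proof of Theorem~\ref{th1}, where the change of variable leads to $\psi(x^\rho)$ inside the integral), so when you substitute $r=su^\rho+(1-s)v^\rho$ you should write $r=y^\rho$ and recover the kernel $t^{\rho-1}(v^\rho-t^\rho)^{\alpha-1}$ in the $y$ variable before invoking Definition~\ref{def}. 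That is the only place where a purely ``$r$-variable'' presentation could obscure why the $\rho^{1-\alpha}$ normalization appears.
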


\noindent
This paper is aimed at establishing some new integral inequalities for generalized $s$-convexity  via Katugampola fractional integrals on fractal sets linked with (\ref{hh}). Some inequalities are presented here for the class of mappinges which their derivatives in absolute value at certin powers are generalized $s$-convexity. Also, we obtained some new inequalities linked with convexity and generalized $s$-convexity via classical integrals as well as  Riemann-Liouville fractional integrals in form of a Corollary. As an application, the inequalities for special means are derived.

\section{Main results}

Hermite-Hadamard inequalites for $s$-convexity via a generalized fractional integral can be wriiten with the aid of the following theorem.

\begin{theorem}\label{th1}
		 Suppose that $\psi :\left[u^{\rho}, v^{\rho}\right] \subset \mathbb{R}_{+} \rightarrow \mathbb{R}^{\alpha}$  is a positive function with $0 \leq u<v$  and $\psi \in X_{c}^{p}\left(u^{\rho}, v^{\rho}\right)$ for $\alpha>0$  and $\rho>0$. If $\psi$ is a generalized $s$-convex function on $[u^{\rho}, v^{\rho}]$, then we obtain
		 \begin{equation}
		 \begin{aligned} 2^{\alpha(s-1)} \psi\left(\frac{u^{\rho}+v^{\rho}}{2}\right) & \leq \frac{\rho^{\alpha} \Gamma(\alpha+1)}{2\left(v^{\rho}-u^{\rho}\right)^{\alpha}}\left[^{\rho} I_{u^+}^{\alpha} \psi\left(v^{\rho}\right)+^{\rho} I_{v^-}^{\alpha} \psi\left(u^{\rho}\right)\right] \\ & \leq\left[\frac{1}{\rho(1+s)}+\alpha \beta(\alpha,\alpha s+1)\right] \frac{\psi\left(u^{\rho}\right)+\psi\left(v^{\rho}\right)}{2}.\end{aligned}\label{minth}
		 \end{equation}
\end{theorem}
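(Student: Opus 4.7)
My strategy is to establish the two halves of \eqref{minth} separately, using in both cases the same device: apply the defining inequality of generalized $s$-convexity, multiply by the kernel $\rho\alpha\, t^{\rho\alpha-1}$, and integrate in $t$ over $[0,1]$. The kernel is chosen so that, after the change of variable $\xi=t^{\rho}u^{\rho}+(1-t^{\rho})v^{\rho}$ that sends $[0,1]$ onto $[u^{\rho},v^{\rho}]$, one has the key identity
\[
\rho\alpha\int_0^1 t^{\rho\alpha-1}\,\psi\bigl(t^{\rho}u^{\rho}+(1-t^{\rho})v^{\rho}\bigr)\,dt \;=\; \frac{\rho^{\alpha}\Gamma(\alpha+1)}{(v^{\rho}-u^{\rho})^{\alpha}}\,{}^{\rho}I_{u^+}^{\alpha}\psi(v^{\rho}),
\]
and symmetrically for ${}^{\rho}I_{v^-}^{\alpha}\psi(u^{\rho})$ with $t^{\rho}$ and $1-t^{\rho}$ interchanged.

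For the left-hand inequality I will apply the midpoint instance of generalized $s$-convexity, $\psi\bigl(\tfrac{x+y}{2}\bigr)\le 2^{-\alpha s}\bigl(\psi(x)+\psi(y)\bigr)$, to the companion pair $x=t^{\rho}u^{\rho}+(1-t^{\rho})v^{\rho}$ and $y=(1-t^{\rho})u^{\rho}+t^{\rho}v^{\rho}$, whose arithmetic mean equals $(u^{\rho}+v^{\rho})/2$ for every $t$. Integrating the resulting inequality against $\rho\alpha\, t^{\rho\alpha-1}$ over $[0,1]$, the left side collapses to an explicit multiple of $\psi\bigl((u^{\rho}+v^{\rho})/2\bigr)$, while the right side, by the identity above, becomes the Katugampola sum with its stated normalization; rearranging produces the lower bound.

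For the right-hand inequality I will apply generalized $s$-convexity directly to each argument:
$\psi(t^{\rho}u^{\rho}+(1-t^{\rho})v^{\rho})\le (t^{\rho})^{\alpha s}\psi(u^{\rho})+(1-t^{\rho})^{\alpha s}\psi(v^{\rho})$,
together with the companion inequality obtained by swapping $t^{\rho}\leftrightarrow 1-t^{\rho}$. Summing and integrating against $\rho\alpha\, t^{\rho\alpha-1}$ gives the Katugampola sum on the left and $(\psi(u^{\rho})+\psi(v^{\rho}))\cdot\rho\alpha\int_0^1 t^{\rho\alpha-1}\bigl[t^{\rho\alpha s}+(1-t^{\rho})^{\alpha s}\bigr]dt$ on the right. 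The first summand of this last integral is the elementary power integral $\rho\alpha\int_0^1 t^{\rho\alpha(1+s)-1}\,dt$, which contributes the $1/(1+s)$-type constant; the second, after the substitution $u=t^{\rho}$, reduces to $\alpha\int_0^1 u^{\alpha-1}(1-u)^{\alpha s}\,du=\alpha\,\beta(\alpha,\alpha s+1)$.

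The only nontrivial step is the first change of variable identifying the weighted $t$-integrals with the Katugampola operators; once it is in place the remainder is routine bookkeeping with Gamma and Beta values. I do not expect to need Lemma~\ref{lem}, which is designed for derivative-based estimates; this theorem should follow entirely from the pointwise generalized $s$-convexity inequality and the substitution identity above.
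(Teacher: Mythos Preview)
Your plan is correct and coincides with the paper's proof in every essential respect: the paper also applies generalized $s$-convexity to the companion pair $t^{\rho}u^{\rho}+(1-t^{\rho})v^{\rho}$ and $t^{\rho}v^{\rho}+(1-t^{\rho})u^{\rho}$, sums, multiplies by the kernel $t^{\alpha\rho-1}$ (your extra factor $\rho\alpha$ is just a normalization), integrates over $[0,1]$, and identifies the resulting integrals with the Katugampola operators via the same change of variable; the left inequality is obtained exactly via the midpoint instance $\psi\bigl(\tfrac{x+y}{2}\bigr)\le 2^{-\alpha s}(\psi(x)+\psi(y))$ you describe, and the Beta evaluation uses the same substitution $t^{\rho}\mapsto a$. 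Your remark that Lemma~\ref{lem} is unnecessary here is also in line with the paper, which invokes it only for the later derivative-based theorems.
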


\begin{proof}
Since $\psi$ is a generalized $s$-convex function on $[u^{\rho}, v^{\rho}]$, for $t\in[0,1]$, we get
\begin{equation*}
\psi\left(t^{\rho} u^{\rho}+\left(1-t^{\rho}\right) v^{\rho}\right) \leq\left(t^{\rho}\right)^{\alpha s} \psi\left(u^{\rho}\right)+\left(1-t^{\rho}\right)^{\alpha s} \psi\left(v^{\rho}\right),
\end{equation*}
and
\begin{equation*}
\psi\left(t^{\rho} v^{\rho}+\left(1-t^{\rho}\right) u^{\rho}\right) \leq\left(t^{\rho}\right)^{\alpha s} \psi\left(v^{\rho}\right)+\left(1-t^{\rho}\right)^{\alpha s} \psi\left(u^{\rho}\right).
\end{equation*}

Combining the above inequalities, we have
\begin{equation}
\psi\left(t^{\rho} u^{\rho}+\left(1-t^{\rho}\right) v^{\rho}\right)+\psi\left(t^{\rho} v^{\rho}+\left(1-t^{\rho}\right) u^{\rho}\right) \leq\left(\left(t^{\rho}\right)^{\alpha s}+\left(1-t^{\rho}\right)^{\alpha s}\right)\left[\psi\left(u^{\rho}\right)+\psi\left(v^{\rho}\right)\right]\label{mineq}.
\end{equation}
\noindent
Multiplying both sides of inequality (\ref{mineq})  by $t^{\alpha \rho-1}$, for $\alpha>0$  and integrating it  over $[0,1]$ with respect to $t$, we obtain
\begin{equation}
\frac{\rho^{\alpha-1} \Gamma(\alpha)}{\left(v^{\rho}-u^{\rho}\right)^{\alpha}}\left[^{\rho} I_{u^{+}}^{\alpha} \psi\left(v^{\rho}\right)+^{\rho} I_{v^-}^{\alpha} \psi\left(u^{\rho}\right)\right] \leq \int_{0}^{1} t^{\alpha \rho-1}\left(\left(t^{\rho}\right)^{\alpha s}+\left(1-t^{\rho}\right)^{\alpha s}\right)\left[\psi\left(u^{\rho}\right)+\psi\left(v^{\rho}\right)\right] dt\label{ineqmin}.
\end{equation}
Since
\begin{equation*}
\int_{0}^{1}t^{\alpha s \rho+\alpha \rho-1}dt=\frac{1}{\alpha \rho(s+1)},
\end{equation*}
and applying change of variable $t^{\rho}=a$, we get
\begin{equation*}
\int_{0}^{1} t^{\alpha \rho-1}\left(1-t^{\rho}\right)^{\alpha s} d t=\frac{\beta(\alpha, \alpha s+1)}{\rho}.
\end{equation*}
Thus, inequality (\ref{ineqmin}) becomes
\begin{equation*}
\frac{\rho^{\alpha} \Gamma(\alpha+1)}{2\left(v^{\rho}-u^{\rho}\right)^{\alpha}}\left[^{\rho} I_{u^+}^{\alpha} \psi\left(v^{\rho}\right)+^{\rho} I_{v^-}^{\alpha} \psi\left(u^{\rho}\right)\right] \leq\left[\frac{1}{\rho(1+s)}+\alpha \beta(\alpha,\alpha s+1)\right] \frac{\psi\left(u^{\rho}\right)+\psi\left(v^{\rho}\right)}{2}.
\end{equation*}

When proving the first part of inequality (\ref{minth}), we observe that $\psi$ is a generalized $s$-convex function on $\left[u^{\rho}, v^{\rho}\right]$, through wich the following inequality is obtained
	\begin{equation}
	\psi\left(\frac{x^{\rho}+y^{\rho}}{2}\right) \leq \frac{\psi\left(x^{\rho}\right)+\psi\left(y^{\rho}\right)}{2^{\alpha s}}\label{inef},
	\end{equation}
	for $x,y\in [a,b], \alpha\geq 0$.\\

Consider $
x^{\rho}=t^{\rho} u^{\rho}+\left(1-t^{\rho}\right) v^{\rho}
$ and $y^{\rho}=
t^{\rho} v^{\rho}+\left(1-t^{\rho}\right) u^{\rho}$, where $t\in [0,1]$.\\
\noindent
Applying inequality (\ref{inef}), we have
\begin{equation}
2^{\alpha s} \psi\left(\frac{u^{\rho}+v^{\rho}}{2}\right) \leq \psi\left(t^{\rho} u^{\rho}+\left(1-t^{\rho}\right) v^{\rho}\right)+\psi\left(t^{\rho} v^{\rho}+\left(1-t^{\rho}\right) u^{\rho}\right).\label{ineqab}
\end{equation}
 Multiplying both sides of the inequality (\ref{ineqab}) by $t^{\alpha \rho-1}$, for $\alpha>0$ and then integrating  over $[0,1]$ with respect to $t$ give the following
 \begin{equation}
 \begin{aligned} \frac{2^{s}}{\alpha \rho} \psi\left(\frac{u^{\rho}+v^{\rho}}{2}\right) \leq & \int_{0}^{1} t^{\alpha \rho-1} \psi\left(t^{\rho} u^{\rho}+\left(1-t^{\rho}\right) v^{\rho}\right) d t+\int_{0}^{1} t^{\alpha \rho-1} \psi\left(t^{\rho} v^{\rho}+\left(1-t^{\rho}\right) u^{\rho}\right) d t \\=& \int_{v}^{u}\left(\frac{v^{\rho}-x^{\rho}}{v^{\rho}-u^{\rho}}\right)^{\alpha-1} \psi\left(x^{\rho}\right) \frac{x^{\rho-1}}{u^{\rho}-v^{\rho}} d x \\ &+\int_{u}^{v}\left(\frac{y^{\rho}-u^{\rho}}{v^{\rho}-u^{\rho}}\right)^{\alpha-1} \psi\left(y^{\rho}\right) \frac{y^{\rho-1}}{v^{\rho}-u^{\rho}} d y \\=& \frac{\rho^{\alpha-1} \Gamma(\alpha)}{\left(v^{\rho}-u^{\rho}\right)^{\alpha}}\left[I_{u^+}^{\alpha} \psi\left(v^{\rho}\right)+^{\rho} I_{v^-}^{\alpha} \psi\left(u^{\rho}\right)\right]. \end{aligned}
 \end{equation}
 Then it follows that
 \begin{equation*}
2^{\alpha(s-1)} \psi\left(\frac{u^{\rho}+v^{\rho}}{2}\right) \leq \frac{\rho^{\alpha} \Gamma(\alpha+1)}{2\left(v^{\rho}-u^{\rho}\right)^{\alpha}}\left[^{\rho} I_{u^+}^{\alpha} \psi\left(v^{\rho}\right)+^{\rho} I_{v^-}^{\alpha} \psi\left(u^{\rho}\right)\right],
 \end{equation*}
 where $\beta(u, v)$ is the Beta function.
\end{proof}

\begin{remark}
	\rm{When substituting $\rho =1$ and $\alpha=1$ in Theorem \ref{th1}, we reported the results of this theorem to that of Theorem 2.1 (see Dragomir and Fitzpatrick \cite{Dragomir1999})}.
	\end{remark}

\begin{example}
	Consider a function $\psi :\left[u^{\rho}, v^{\rho}\right] \subset \mathbb{R}_{+}\rightarrow \mathbb{R}^{\alpha}$, such that $\psi(x)=x^{s\alpha}$ belongs to $GK_s^2$, $s\in (0,1]$ with $\psi \in X_{c}^{p}\left(u^{\rho}, v^{\rho}\right)$, where $\alpha>0$ and $\rho>0$. Suppose $u=0$ and $v=1$. For $\alpha =2, s=\frac{1}{2}$ and $\rho=1$, the first, second and third parts of \rm{inequalities (\ref{minth})} give $0.25, 0.333$ and $0.5$, respectively. Thus, the inequalities \ref{minth} hold. Similarlary, when $\alpha =1, s=\frac{1}{2}$ and $\rho=2$, we get $0.35355,0.5$ and $0.8$, respectively, which satisfy Theorem \ref{th1}.

\end{example}

In the next theorem, the new upper bound for the right-hand side of (\ref{hh}) for generalized $s$-convexity is proposed. For this recall that the generalized beta function is defined as

\begin{flalign*}
\beta_{\rho}(u,v)=\int_{0}^{1}\rho (1-x^{\rho})^{b-1} (x^{\rho})^{a-1} x^{\rho-1} d x.
\end{flalign*}
Note that as $\rho \rightarrow 1$  then $\beta_{\rho}(u, v) \rightarrow \beta(u, v)$.

\begin{theorem}\label{dth}
 Let $\alpha>0$ and $\rho>0$. Let $\psi :\left[u^{\rho}, v^{\rho}\right] \subset \mathbb{R}_{+} \rightarrow \mathbb{R}^{\alpha}$ be a differentiable function on $(u^{\rho}, v^{\rho})$ whereby $\psi^{\prime} \in L_{1}[u, v]$ with  $0 \leq u<v$. If $|\psi^{\prime}|^{q}$ is a grneralized $s$-convex on $\left[u^{\rho}, v^{\rho}\right]$ for  $q \geq 1$, we get
\begin{flalign*} 
 \bigg|\frac{\psi\left(u^{\rho}\right)+\psi\left(v^{\rho}\right)}{2}-\frac{\rho^{\alpha} \Gamma(\alpha+1)}{2\left(v^{\rho}-u^{\rho}\right)^{\alpha}}\left[^{\rho} I_{u^+}^{\alpha} \psi\left(v^{\rho}\right)+^{\rho} I_{v^-}^{\alpha} \psi\left(u^{\rho}\right)\right]\bigg|&\leq \frac{\rho\left(v^{\rho}-u^{\rho}\right)}{2}\bigg(\frac{1}{(\alpha +1)\rho}\bigg)^{\frac{q-1}{q}}\\&\times\bigg[\frac{{\beta}_{\rho}(\alpha s+1, \alpha+1)}{\rho}+\frac{1}{(\alpha (s+1) \rho+1)}\bigg]^{\frac{1}{q}}\\&\times(|\psi^{\prime}(u^{\rho})|^{q}+|\psi^{\prime}(v^{\rho})|^{q})^\frac{1}{q}.
\end{flalign*} 
\end{theorem}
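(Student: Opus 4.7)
The plan is to start from the identity in Lemma \ref{lem}, which writes the quantity inside the absolute value as $\frac{\rho(v^{\rho} - u^{\rho})}{2} \int_{0}^{1} [(1-t^{\rho})^{\alpha} - (t^{\rho})^{\alpha}] t^{\rho-1} \psi'(t^{\rho} u^{\rho} + (1-t^{\rho}) v^{\rho})\, dt$. Taking absolute values and pushing them inside the integral via the triangle inequality reduces matters to bounding $\int_{0}^{1} w(t)\, |\psi'(t^{\rho} u^{\rho} + (1-t^{\rho}) v^{\rho})|\, dt$ with the nonnegative weight $w(t) = t^{\rho-1}\, |(1-t^{\rho})^{\alpha} - (t^{\rho})^{\alpha}|$.

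Since $q \geq 1$, the next step is to apply the power mean inequality against the measure $w(t)\, dt$, yielding $\int w\, |\psi'|\, dt \leq \left(\int w\, dt\right)^{1-1/q}\!\left(\int w\, |\psi'|^{q}\, dt\right)^{1/q}$. For the first factor I would use the pointwise majorisation $|(1-t^{\rho})^{\alpha} - (t^{\rho})^{\alpha}| \leq (1-t^{\rho})^{\alpha}$, followed by the substitution $u = t^{\rho}$, to obtain $\int_{0}^{1} w\, dt \leq \frac{1}{(\alpha+1)\rho}$, which is exactly the prefactor $\left(\frac{1}{(\alpha+1)\rho}\right)^{(q-1)/q}$ claimed in the statement.

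For the $q$-th power factor I would invoke the generalized $s$-convexity of $|\psi'|^{q}$ to bound $|\psi'(t^{\rho} u^{\rho} + (1-t^{\rho}) v^{\rho})|^{q} \leq (t^{\rho})^{\alpha s}|\psi'(u^{\rho})|^{q} + (1-t^{\rho})^{\alpha s}|\psi'(v^{\rho})|^{q}$. Inserting this and, as above, dominating $|(1-t^{\rho})^{\alpha} - (t^{\rho})^{\alpha}|$ by $(1-t^{\rho})^{\alpha}$, the $q$-power integral breaks into two elementary pieces: the first of the form $\int_{0}^{1} t^{\rho-1}(1-t^{\rho})^{\alpha} (t^{\rho})^{\alpha s}\, dt$, which is precisely $\beta_{\rho}(\alpha s+1, \alpha+1)/\rho$ by the definition of the generalized beta function recalled just before the theorem, and the second of the form $\int_{0}^{1} t^{\rho-1}(1-t^{\rho})^{\alpha(s+1)}\, dt$, which collapses under $u = t^{\rho}$ to the constant $1/(\alpha(s+1)\rho + 1)$. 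Combining these two coefficients into the common factor $|\psi'(u^{\rho})|^{q} + |\psi'(v^{\rho})|^{q}$ and taking the $q$-th root then assembles the right-hand bracket of the statement.

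The most delicate point I expect to need care is the handling of the signed kernel $(1-t^{\rho})^{\alpha} - (t^{\rho})^{\alpha}$, which vanishes and changes sign at $t^{\rho} = 1/2$: the pointwise majorisation by $(1-t^{\rho})^{\alpha}$ has to be applied consistently in both the $\int w\, dt$ computation and the $\int w\, |\psi'|^{q}\, dt$ computation so that the advertised constants actually appear, and so that the symmetry in $u^{\rho} \leftrightarrow v^{\rho}$ of the final upper bound is preserved. Everything after that reduces to routine bookkeeping of beta-function integrals and the substitution $u = t^{\rho}$.
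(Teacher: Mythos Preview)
Your overall architecture matches the paper's: start from Lemma~\ref{lem}, take absolute values, apply the power-mean inequality with weight $w(t)=t^{\rho-1}\bigl|(1-t^{\rho})^{\alpha}-(t^{\rho})^{\alpha}\bigr|$, invoke the generalized $s$-convexity of $|\psi'|^{q}$, and reduce everything to beta-type integrals via the substitution $a=t^{\rho}$. (The paper treats $q=1$ and $q>1$ separately, but that is cosmetic.)

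The genuine gap is precisely the step you flag as ``most delicate.'' The majorisation
\[
\bigl|(1-t^{\rho})^{\alpha}-(t^{\rho})^{\alpha}\bigr|\le(1-t^{\rho})^{\alpha}
\]
is false for $t^{\rho}>1/2$: at $t^{\rho}=1$ the left side equals $1$ while the right side equals $0$. What the paper actually uses, in both the $\int w\,dt$ factor and the $\int w\,|\psi'|^{q}\,dt$ factor, is the triangle inequality
\[
\bigl|(1-t^{\rho})^{\alpha}-(t^{\rho})^{\alpha}\bigr|\le(1-t^{\rho})^{\alpha}+(t^{\rho})^{\alpha}.
\]
With this correct bound, the $q$-power factor produces the coefficient $\dfrac{\beta_{\rho}(\alpha s+1,\alpha+1)}{\rho}+\dfrac{1}{\rho(\alpha(s+1)+1)}$ in front of $|\psi'(u^{\rho})|^{q}$ and, by symmetry of the beta function, the \emph{same} coefficient in front of $|\psi'(v^{\rho})|^{q}$; the factorisation into the common bracket times $\bigl(|\psi'(u^{\rho})|^{q}+|\psi'(v^{\rho})|^{q}\bigr)$ is then immediate, with no asymmetry to repair. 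Incidentally, with the correct bound the first factor integrates to $\tfrac{2}{\rho(\alpha+1)}$, and your integral $\int_{0}^{1}t^{\rho-1}(1-t^{\rho})^{\alpha(s+1)}\,dt$ evaluates to $\tfrac{1}{\rho(\alpha(s+1)+1)}$ rather than $\tfrac{1}{\alpha(s+1)\rho+1}$; the printed statement appears to carry minor typos in both constants, but the method is as described.
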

\begin{proof}
	In view of Lemma \ref{lem}, we have
	\begin{flalign}
	\bigg|\frac{\psi\left(u^{\rho}\right)+\psi\left(v^{\rho}\right)}{2}-\frac{\rho^{\alpha} \Gamma(\alpha+1)}{2\left(v^{\rho}-u^{\rho}\right)^{\alpha}}\left[^{\rho} I_{u^+}^{\alpha} \psi\left(v^{\rho}\right)+^{\rho} I_{v^-}^{\alpha} \psi\left(u^{\rho}\right)\right]\bigg|\nonumber&=\bigg|\frac{\rho\left(v^{\rho}-u^{\rho}\right)}{2} \int_{0}^{1}\left[\left(1-t^{\rho}\right)^{\alpha}-\left(t^{\rho}\right)^{\alpha}\right] t^{\rho-1} \\&\times \psi^{\prime}\left(t^{\rho} u^{\rho}+\left(1-t^{\rho}\right) v^{\rho}\right) d t\bigg|\label{lemgs}.
\end{flalign}
First, suppose that $q=1$. Since $|\psi^{\prime}|$ is a grneralized $s$-convex on $\left[a^{\rho},b^{\rho}\right]$, we have
\begin{equation*}
\psi^{\prime}\left(t^{\rho} u^{\rho}+\left(1-t^{\rho}\right) v^{\rho}\right) \leq\left(t^{\rho}\right)^{\alpha s} \psi^{\prime}\left(u^{\rho}\right)+\left(1-t^{\rho}\right)^{\alpha s} \psi^{\prime}\left(v^{\rho}\right).
\end{equation*}
Therefore,
\begin{small}
\begin{flalign}
\bigg| \int_{0}^{1}\left[\left(1-t^{\rho}\right)^{\alpha}-\left(t^{\rho}\right)^{\alpha}\right] t^{\rho-1} \psi^{\prime}\left(t^{\rho} u^{\rho}+\left(1-t^{\rho}\right) v^{\rho}\right) d t\bigg|\nonumber&\leq \int_{0}^{1}\left[\left(1-t^{\rho}\right)^{\alpha}+\left(t^{\rho}\right)^{\alpha}\right] t^{\rho-1} [\left(t^{\rho}\right)^{\alpha s} |\psi^{\prime}(u^{\rho}|)\nonumber\\&+\left(1-t^{\rho}\right)^{\alpha s} |\psi^{\prime}\left(v^{\rho}|\right)] dt
\nonumber\\&=S_1+S_2.\label{s}
\end{flalign}
\end{small}
Calculating $S_1$ and $S_2$, we get
\begin{flalign}
S_1&=|\psi^{\prime}(u^{\rho}|)\bigg[\int_{0}^{1} (1-t^{\rho})^{\alpha}t^{\rho-1}(t^{\rho})^{\alpha s}dt+\int_{0}^{1} (t^{\rho})^{\alpha (s+1)}t^{\rho-1}dt \bigg]\nonumber\\&=|\psi^{\prime}\left(u^{\rho}\right)|\left[\frac{{\beta_{\rho}}(\alpha s+1, \alpha+1)}{\rho}+\frac{1}{\rho(\alpha s+\alpha +1)}\right],\label{ss}
\end{flalign}
and
\begin{flalign}
S_2&=|\psi^{\prime}(v^{\rho}|)\bigg[\int_{0}^{1} (1-t^{\rho})^{\alpha(s+1)}t^{\rho-1}dt+\int_{0}^{1} (t^{\rho})^{\alpha}t^{\rho-1}(1-t^{\rho})^{\alpha s}dt \bigg]\nonumber\\&=|\psi^{\prime}(v^{\rho})|\bigg[\frac{1}{\rho(\alpha s+\alpha +1)}+\frac{{\beta}_{\rho}(\alpha +1,\alpha s+1)}{\rho}\bigg].\label{sss}
\end{flalign}
Thus, if we use inequalities (\ref{ss}) and (\ref{sss}) in (\ref{s}), we obtain
\begin{small}
	\begin{flalign}
	\bigg| \int_{0}^{1}\left[\left(1-t^{\rho}\right)^{\alpha}-\left(t^{\rho}\right)^{\alpha}\right] t^{\rho-1} \psi^{\prime}\left(t^{\rho} u^{\rho}+\left(1-t^{\rho}\right) v^{\rho}\right) d t\bigg|\nonumber&\leq |\psi^{\prime}\left(u^{\rho}\right)|\left[\frac{{\beta_{\rho}}(\alpha s+1, \alpha+1)}{\rho}+\frac{1}{\rho(\alpha s+\alpha +1)}\right]\\&+|\psi^{\prime}(v^{\rho})|\bigg[\frac{1}{\rho(\alpha s+\alpha +1)}+\frac{{\beta}_{\rho}(\alpha +1,\alpha s+1)}{\rho}\bigg]\label{es}.
	\end{flalign}
\end{small}
The inequalities (\ref{lemgs}) and (\ref{es}) complete the proof for this case.

Consider the second case, $q>1$. Using inequality (\ref{lemgs}) and the power mean inequality, we obtain
\begin{small}
	\begin{flalign}
	\bigg| \int_{0}^{1}\left[\left(1-t^{\rho}\right)^{\alpha}-\left(t^{\rho}\right)^{\alpha}\right] t^{\rho-1} \psi^{\prime}\left(t^{\rho} u^{\rho}+\left(1-t^{\rho}\right) v^{\rho}\right) d t\bigg|\nonumber&\leq \bigg(\int_{0}^{1}|(1-t^{\rho})^{\alpha}-(t^{\rho})^{\alpha}| t^{\rho-1}dt\bigg)^{1-\frac{1}{q}} \\&\times\bigg(\int_{0}^{1}|(1-t^{\rho})^{\alpha}-(t^{\rho})^{\alpha}| t^{\rho-1}|\psi^{\prime}(t^{\rho}u^{\rho}+(1-t^{\rho})v^{\rho})|^{q} dt\bigg)^{\frac{1}{q}}\nonumber\\&\leq \int_{0}^{1}\bigg([(1-t^{\rho})^{\alpha}+(t^{\rho})^{\alpha}] t^{\rho-1}dt\bigg)^{1-\frac{1}{q}}\nonumber \\&\times\bigg(\int_{0}^{1}[(1-t^{\rho})^{\alpha}+(t^{\rho})^{\alpha}] t^{\rho-1}[(t^{\rho})^{\alpha s} |\psi^{\prime}(u^{\rho})|^{q}\nonumber\\&+(1-t^{\rho})^{\alpha s} |\psi^{\prime}(v^{\rho})|^{q}] dt\bigg)^{\frac{1}{q}}\nonumber\\&=\bigg(\frac{1}{\rho(\alpha +1)}\bigg)^{\frac{q-1}{q}}\nonumber\\&\times\bigg(\bigg(\frac{{\beta}_{\rho}(\alpha s+1, \alpha+1)}{\rho}+\frac{1}{\rho(\alpha s+\alpha +1)}\bigg)|\psi^{\prime}(u^{\rho})|^{q}\nonumber\\&+\frac{1}{\rho(\alpha s+\alpha +1)}+\frac{{\beta}_{\rho}(\alpha +1,\alpha s+1))}{\rho}|\psi^{\prime}(v^{\rho})|^{q}\bigg)^{\frac{1}{q}}.\label{eth}
	\end{flalign}
\end{small}
The inequalities (\ref{lemgs}) and (\ref{eth}) complete the proof.
\end{proof}
\newpage
\begin{corollary}\label{corol}
Using the similar assumptions given in Theorem \ref{dth}.
\begin{itemize}
	\item[1.]If $\rho=1$, we get
\begin{flalign*} 
\bigg|\frac{\psi(u)+\psi(v)}{2}-\frac{\Gamma(\alpha+1)}{2(v-u)^{\alpha}}[ I_{u^+}^{\alpha} \psi(v)+I_{v^-}^{\alpha} \psi(u)]\bigg|&\leq \frac{(v-u)}{2}\bigg(\frac{1}{\alpha +1}\bigg)^{\frac{q-1}{q}}\\&\times\bigg[{\beta}(\alpha s+1, \alpha+1)+\frac{1}{\alpha (s+1) +1}\bigg]^{\frac{1}{q}}(|\psi^{\prime}(u)|+|\psi^{\prime}(v)|).
\end{flalign*}  
	\item[2.]If $\rho=1$ and $s=1$, then
	\begin{flalign*} 
	\bigg|\frac{\psi(u)+\psi(v)}{2}-\frac{\Gamma(\alpha+1)}{2(v-u)^{\alpha}}[ I_{u^+}^{\alpha} \psi(v)+I_{v^-}^{\alpha} \psi(u)]\bigg|&\leq \frac{(v-u)}{2}\bigg(\frac{1}{1+\alpha }\bigg)^{\frac{q-1}{q}}\\&\times\bigg({\beta}(\alpha +1, \alpha+1)+\frac{1}{1+2\alpha }\bigg)^{\frac{1}{q}}(|\psi^{\prime}(u)|^{q}+|\psi^{\prime}(v)|^{q}).
	\end{flalign*} 
	\item[3.]If $\rho=1$, $s=1$ and $\alpha=1$, we obtain
		\begin{flalign*} 
	\bigg|\frac{\psi(u)+\psi(v)}{2}-\frac{1}{v-u}\int_{u}^{v}\psi(x)dx\bigg|&\leq \frac{(v-u)}{2}\bigg(\frac{1}{2}\bigg)^{\frac{q-1}{q}}\bigg(\frac{\psi^{\prime}(u)|^{q}+\psi^{\prime}(v)|^{q}}{2}\bigg)^{\frac{1}{q}}.
	\end{flalign*} 
\end{itemize}	
\end{corollary}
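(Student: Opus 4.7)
The plan is to derive all three items as direct specializations of Theorem \ref{dth}: for item 1 I set $\rho = 1$, for item 2 I additionally set $s = 1$, and for item 3 I further set $\alpha = 1$. No new analytic argument is required, because each substitution collapses the Katugampola integral $^\rho I$ and the generalized Beta function $\beta_\rho$ to their classical counterparts.

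For item 1, putting $\rho = 1$ gives $u^\rho = u$ and $v^\rho = v$, and by Definition \ref{def} the Katugampola kernel $\rho^{1-\alpha}(v^\rho - t^\rho)^{\alpha-1} t^{\rho-1}$ reduces to $(v-t)^{\alpha-1}$. Therefore $^\rho I_{u^+}^\alpha \psi(v^\rho)$ and $^\rho I_{v^-}^\alpha \psi(u^\rho)$ become the Riemann-Liouville integrals $I_{u^+}^\alpha \psi(v)$ and $I_{v^-}^\alpha \psi(u)$. The remark preceding Theorem \ref{dth} gives $\beta_\rho(a,b) \to \beta(a,b)$ as $\rho \to 1$, and each surviving factor of $\rho$ equals one, yielding the stated inequality.

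For item 2, I substitute $s = 1$ into the result of item 1. Since $s$ enters the right-hand side only through $\beta(\alpha s + 1, \alpha + 1)$ and the denominator $\alpha(s+1)+1$, these become $\beta(\alpha+1, \alpha+1)$ and $2\alpha+1$, matching the claim. For item 3, I additionally set $\alpha = 1$: the first-order Riemann-Liouville integral coincides with the classical integral, $I_{u^+}^1 \psi(v) = I_{v^-}^1 \psi(u) = \int_u^v \psi(x)\,dx$, and $\Gamma(2) = 1$, so the left-hand side reduces to the classical Hermite-Hadamard deviation. On the right, $\beta(2,2) = 1/6$ and $1/(1+2\alpha) = 1/3$ sum to $1/2$, and the resulting factor $(1/2)^{1/q}$ absorbs into the norm as $((|\psi'(u)|^q + |\psi'(v)|^q)/2)^{1/q}$.

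The only real obstacle is bookkeeping: the bound of Theorem \ref{dth} carries several powers of $\rho$, two Beta-function terms, and the power-mean exponent $(q-1)/q$, so one must carefully verify that $\beta_\rho(\alpha s + 1, \alpha + 1)/\rho$ collapses cleanly to $\beta(\alpha s + 1, \alpha + 1)$ and that no stray factor of $\rho$ survives. Apart from this arithmetic check, the corollary is immediate.
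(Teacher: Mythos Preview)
Your proposal is correct and matches the paper's approach: the corollary is stated immediately after Theorem~\ref{dth} with no separate proof, and each item is obtained exactly as you describe, by successive substitution of $\rho=1$, $s=1$, and $\alpha=1$ into the bound of Theorem~\ref{dth}. Your verification that $\beta(2,2)+\tfrac{1}{3}=\tfrac{1}{2}$ and the repackaging $(1/2)^{1/q}(|\psi'(u)|^q+|\psi'(v)|^q)^{1/q}=\bigl((|\psi'(u)|^q+|\psi'(v)|^q)/2\bigr)^{1/q}$ supply more detail than the paper itself gives.
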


\begin{theorem}\label{thss}
With the similar assumptions stated in \rm{Theorem \ref{dth}}, we get the following inequality:
 
\begin{flalign} 
 \bigg|\frac{\psi\left(u^{\rho}\right)+\psi\left(v^{\rho}\right)}{2}-\frac{\rho^{\alpha} \Gamma(\alpha+1)}{2\left(v^{\rho}-u^{\rho}\right)^{\alpha}}\left[^{\rho} I_{u^+}^{\alpha} \psi\left(v^{\rho}\right)+^{\rho} I_{v^-}^{\alpha} \psi\left(u^{\rho}\right)\right]\bigg|\nonumber&\leq \frac{(v^{\rho}-u^{\rho})}{2}\bigg(\frac{1}{\rho}\bigg)^{\frac{q-1}{q}} \\&\times\bigg[\frac{\beta_{\rho}(\alpha s +1,\alpha +1)}{\rho}+\frac{1}{(\alpha (s+1) +1)\rho}\bigg]^{\frac{1}{q}}\nonumber\\&\times\bigg(|\psi^{\prime}(u^{\rho})|^{q}+|\psi^{\prime}(v^{\rho})|^{q}\bigg)^{\frac{1}{q}}.\label{cth}
\end{flalign}	
\end{theorem}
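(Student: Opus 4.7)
The plan is to mirror the structure of the proof of Theorem~\ref{dth} but to apply H\"{o}lder's inequality in a different way to the integral representation supplied by Lemma~\ref{lem}. Whereas Theorem~\ref{dth}'s proof uses the power-mean form with weight $[(1-t^{\rho})^{\alpha}+(t^{\rho})^{\alpha}]t^{\rho-1}$ (which is why its prefactor involves $1/((\alpha+1)\rho)$), the prefactor $(1/\rho)^{(q-1)/q}$ appearing in~(\ref{cth}) strongly suggests splitting off only the factor $t^{\rho-1}$ and absorbing the kernel $|(1-t^{\rho})^{\alpha}-(t^{\rho})^{\alpha}|$ into the $q$th-power integral.

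First I would invoke Lemma~\ref{lem}, pass the absolute value inside the integral, and then apply H\"{o}lder's inequality with conjugate exponents $q/(q-1)$ and $q$ by writing $t^{\rho-1}=t^{(\rho-1)(q-1)/q}\cdot t^{(\rho-1)/q}$. This produces
\begin{equation*}
\int_{0}^{1}\!\bigl|(1-t^{\rho})^{\alpha}-(t^{\rho})^{\alpha}\bigr|t^{\rho-1}|\psi'(\cdot)|\,dt\le\left(\int_{0}^{1}\!t^{\rho-1}dt\right)^{\!\frac{q-1}{q}}\left(\int_{0}^{1}\!t^{\rho-1}\bigl|(1-t^{\rho})^{\alpha}-(t^{\rho})^{\alpha}\bigr|^{q}|\psi'(\cdot)|^{q}\,dt\right)^{\!\frac{1}{q}},
\end{equation*}
whose first factor is immediately $(1/\rho)^{(q-1)/q}$, matching the stated bound.

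To deal with the inner $q$th power I would use the observation that both $(t^{\rho})^{\alpha}$ and $(1-t^{\rho})^{\alpha}$ lie in $[0,1]$ for $t\in[0,1]$ and $\alpha>0$, so $|(1-t^{\rho})^{\alpha}-(t^{\rho})^{\alpha}|\le 1$. Therefore, since $q\ge 1$,
\begin{equation*}
\bigl|(1-t^{\rho})^{\alpha}-(t^{\rho})^{\alpha}\bigr|^{q}\le\bigl|(1-t^{\rho})^{\alpha}-(t^{\rho})^{\alpha}\bigr|\le(1-t^{\rho})^{\alpha}+(t^{\rho})^{\alpha}.
\end{equation*}
Combined with the generalized $s$-convexity of $|\psi'|^{q}$, this bounds the remaining integrand by $t^{\rho-1}[(1-t^{\rho})^{\alpha}+(t^{\rho})^{\alpha}][(t^{\rho})^{\alpha s}|\psi'(u^{\rho})|^{q}+(1-t^{\rho})^{\alpha s}|\psi'(v^{\rho})|^{q}]$.

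Finally, each of the four Beta-type integrals that arises is evaluated by the substitution $a=t^{\rho}$: two of them reduce to $\beta_{\rho}(\alpha s+1,\alpha+1)/\rho$ (using the symmetry $\beta(a,b)=\beta(b,a)$) and the other two to $1/[\rho(\alpha(s+1)+1)]$. Grouping by $|\psi'(u^{\rho})|^{q}$ and $|\psi'(v^{\rho})|^{q}$ yields the common bracket $\beta_{\rho}(\alpha s+1,\alpha+1)/\rho+1/[\rho(\alpha(s+1)+1)]$, and the leftover $\rho$ from Lemma~\ref{lem} cancels against the $\rho^{-(q-1)/q}$ from H\"{o}lder together with the $\rho^{-1/q}$ hidden inside the $1/q$th power of the bracket, leaving precisely $(v^{\rho}-u^{\rho})/2$ as in~(\ref{cth}). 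The step requiring the most care is the pointwise bound $|(1-t^{\rho})^{\alpha}-(t^{\rho})^{\alpha}|^{q}\le(1-t^{\rho})^{\alpha}+(t^{\rho})^{\alpha}$, which depends essentially on $q\ge 1$ and on both summands being bounded by one; this is the device that allows the $q$th power to collapse and is what makes the different H\"{o}lder split work.
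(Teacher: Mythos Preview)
Your approach is essentially the paper's own. The paper also applies the power-mean (H\"older) inequality with weight $t^{\rho-1}$ alone, obtaining $(\int_0^1 t^{\rho-1}\,dt)^{1-1/q}=(1/\rho)^{(q-1)/q}$ for the first factor, then bounds $|(1-t^\rho)^\alpha-(t^\rho)^\alpha|$ by $(1-t^\rho)^\alpha+(t^\rho)^\alpha$, invokes generalized $s$-convexity of $|\psi'|^q$, and evaluates the same four Beta-type integrals. Your explicit justification of $|(1-t^\rho)^\alpha-(t^\rho)^\alpha|^{q}\le |(1-t^\rho)^\alpha-(t^\rho)^\alpha|$ via the bound by $1$ is in fact more careful than the paper, which silently passes from the $q$th power to the first power.

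One bookkeeping remark: your last sentence says the $\rho$ from Lemma~\ref{lem} cancels against $\rho^{-(q-1)/q}\cdot\rho^{-1/q}$ to leave just $(v^\rho-u^\rho)/2$. That cancellation is arithmetically correct, but the inequality~(\ref{cth}) as displayed keeps the factors $(1/\rho)^{(q-1)/q}$ and the $1/\rho$'s inside the bracket explicit while writing only $(v^\rho-u^\rho)/2$ in front---so the stated right-hand side is actually off from what the argument produces by a factor of $\rho$. This is a typographical inconsistency in the theorem statement, not a flaw in your reasoning.
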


\begin{proof}
	Using the fact $|\psi^{\prime}|^{q}$, a grneralized $s$-convex on $\left[u^{\rho},v^{\rho}\right]$ with $q\geq1$, we get
	\begin{equation}
	\psi^{\prime}\left(t^{\rho} u^{\rho}+\left(1-t^{\rho}\right) v^{\rho}\right)\nonumber \leq\left(t^{\rho}\right)^{\alpha s} \psi^{\prime}\left(u^{\rho}\right)+\left(1-t^{\rho}\right)^{\alpha s} \psi^{\prime}\left(v^{\rho}\right).\label{ines}
	\end{equation}

	Applying inequality (\ref{ines}) together with the power mean inequality, we get
\begin{small}
	\begin{flalign*}
	\bigg| \int_{0}^{1}\left[\left(1-t^{\rho}\right)^{\alpha}-\left(t^{\rho}\right)^{\alpha}\right] t^{\rho-1} \psi^{\prime}\left(t^{\rho} u^{\rho}+\left(1-t^{\rho}\right) v^{\rho}\right) d t\bigg|\nonumber&\leq \bigg( \int_{0}^{1}t^{\rho-1}dt\bigg)^{1-\frac{1}{q}} \\&\times\bigg(\int_{0}^{1}|(1-t^{\rho})^{\alpha}-(t^{\rho})^{\alpha}| t^{\rho-1}|\psi^{\prime}(t^{\rho}u^{\rho}+(1-t^{\rho})v^{\rho})|^{q} dt\bigg)^{\frac{1}{q}}\nonumber\\&\leq \bigg(\frac{1}{\rho}\bigg)^{1-\frac{1}{q}}\nonumber \bigg(\int_{0}^{1}[(1-t^{\rho})^{\alpha}+(t^{\rho})^{\alpha}] t^{\rho-1}[(t^{\rho})^{\alpha s} |\psi^{\prime}(u^{\rho})|^{q}\nonumber\\&+(1-t^{\rho})^{\alpha s} |\psi^{\prime}(v^{\rho})|^{q}] dt\bigg)^{\frac{1}{q}}\nonumber\\&\leq \bigg(\frac{1}{\rho}\bigg)^{1-\frac{1}{q}} \bigg(|\psi^{\prime}(u^{\rho})|^{q} \int_{0}^{1}[(1-t^{\rho})^{\alpha}(t^{\rho})^{\alpha s}t^{\rho-1}+(t^{\rho})^{\alpha}(t^{\rho})^{\alpha s}t^{\rho-1}]dt\nonumber\\&\nonumber+|\psi^{\prime}(v^{\rho})|^{q}\int_{0}^{1}[(1-t^{\rho})^{\alpha}(1-t^{\rho})^{\alpha s}t^{\rho-1}+(t^{\rho})^{\alpha}(1-t^{\rho})^{\alpha s}t^{\rho-1}]dt\bigg)^{\frac{1}{q}}\nonumber\\&=\bigg(\frac{1}{\rho}\bigg)^{1-\frac{1}{q}} \nonumber\\&\times\bigg(|\psi^{\prime}(u^{\rho})|^{q}\bigg[\frac{\beta_{\rho}(\alpha s +1,\alpha +1)}{\rho}+\frac{1}{\rho(\alpha s +\alpha +1)}\bigg] \nonumber\\&\nonumber+|\psi^{\prime}(v^{\rho})|^{q}\bigg[\frac{\beta_{\rho}(\alpha +1,\alpha s +1)}{\rho}+\frac{1}{\rho(\alpha s +\alpha +1)}\bigg]\bigg)^{\frac{1}{q}}.
	\end{flalign*} 
\end{small}
\end{proof}	
\begin{remark}
If we choose $\rho=1$ in Theorem \ref{thss}, then we get the following
	\begin{flalign*} 
	\bigg|\frac{\psi(u)+\psi(v)}{2}-\frac{\Gamma(\alpha+1)}{2(v-u)^{\alpha}}[ I_{u+}^{\alpha} \psi(v)+I_{v-}^{\alpha} \psi(u)]\bigg|&\leq \frac{(v-u)}{2}\\&\times\bigg[{\beta}(\alpha s+1, \alpha+1)+\frac{1}{\alpha (s+1) +1}\bigg]^{\frac{1}{q}}(|\psi^{\prime}(u)|+|\psi^{\prime}(v)|).
	\end{flalign*}
\end{remark}
	\begin{remark}
When choosing $\rho=1$ and $s=\frac{1}{2}$ in Theorem \ref{thss}, we obtain
\begin{flalign*} 
\bigg|\frac{\psi(u)+\psi(v)}{2}-\frac{\Gamma(\alpha+1)}{2(v-u)^{\alpha}}[ I_{u+}^{\alpha} \psi(v)+I_{v-}^{\alpha} \psi(u)]\bigg|&\leq \frac{(v-u)}{2}\bigg({\beta}\bigg(\frac{\alpha}{2} +1, \alpha+1\bigg)+\frac{1}{\frac{3}{2}\alpha +1}\bigg)^{\frac{1}{q}}\\&\times(|\psi^{\prime}(u)|^{q}+|\psi^{\prime}(v)|^{q}).
\end{flalign*}
\end{remark}

\begin{corollary}\label{coq}
Choosing $\rho=1$, $s=1$ and $\alpha=1$in \rm{Theorem \ref{thss}}, we obtain
		\begin{flalign*} 
\bigg|\frac{\psi(u)+\psi(v)}{2}-\frac{1}{v-u}\int_{u}^{v}\psi(x)dx\bigg|\leq \frac{(v-u)}{2}\bigg(\frac{\psi^{\prime}(u)|^{q}+\psi^{\prime}(v)|^{q}}{2}\bigg)^{\frac{1}{q}}.
\end{flalign*} 
\end{corollary}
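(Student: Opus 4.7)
The plan is to specialize Theorem \ref{thss} by substituting $\rho=1$, $s=1$, and $\alpha=1$ directly into its conclusion and then simplify each factor. Since the inequality in Theorem \ref{thss} has already been proven in full generality, no further analytic work is needed here; the task is purely a matter of bookkeeping the constants and recognizing the resulting classical integral on the left-hand side.

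First I would handle the left-hand side. With $\rho=1$ we have $u^{\rho}=u$ and $v^{\rho}=v$. From Definition \ref{def} with $\alpha=1$ and $\rho=1$, both Katugampola operators collapse to ordinary integration, giving $I_{u^+}^{1}\psi(v) = I_{v^-}^{1}\psi(u) = \int_{u}^{v}\psi(t)\,dt$. Since $\Gamma(\alpha+1)=\Gamma(2)=1$, the prefactor in front of the bracket becomes $\frac{1}{2(v-u)}$, and combining the two equal integrals produces the familiar mean $\frac{1}{v-u}\int_{u}^{v}\psi(x)\,dx$, so the left-hand side takes exactly the form claimed in the statement of the corollary.

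Next I would simplify the right-hand side. The factor $\frac{(v^{\rho}-u^{\rho})}{2}\bigl(\frac{1}{\rho}\bigr)^{(q-1)/q}$ reduces to $\frac{v-u}{2}$ since $\rho=1$. The generalized beta $\beta_{\rho}$ tends to the ordinary beta $\beta$ as $\rho\to 1$ (as noted just before the statement of Theorem \ref{dth}), so the bracket becomes $\beta(2,2)+\tfrac{1}{3}$. Evaluating $\beta(2,2)=\frac{\Gamma(2)\Gamma(2)}{\Gamma(4)}=\frac{1}{6}$, this sum equals $\frac{1}{2}$. Raising to the $\frac{1}{q}$ power and absorbing the resulting $\bigl(\frac{1}{2}\bigr)^{1/q}$ inside the final parenthesis yields $\frac{v-u}{2}\left(\frac{|\psi'(u)|^{q}+|\psi'(v)|^{q}}{2}\right)^{1/q}$, matching the stated bound.

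There is essentially no obstacle, since everything follows from numerical evaluation of the special beta value and the fact that the Riemann--Liouville integral of order one is ordinary integration. The only subtlety worth flagging is the clean cancellation $\beta(2,2)+\tfrac{1}{3}=\tfrac{1}{2}$, which is exactly what lets the factor merge with $|\psi'(u)|^{q}+|\psi'(v)|^{q}$ into the symmetric power mean on the right. As an additional sanity check, one can compare with part 3 of Corollary \ref{corol}, which arrives at the same bound via a different reduction route through Theorem \ref{dth}, confirming consistency.
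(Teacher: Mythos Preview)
Your argument is correct and is exactly the intended route: the paper gives no separate proof for this corollary, so the direct substitution $\rho=s=\alpha=1$ into Theorem~\ref{thss} together with the evaluation $\beta(2,2)+\tfrac{1}{3}=\tfrac{1}{2}$ is precisely what is meant. One small inaccuracy in your closing remark: part~3 of Corollary~\ref{corol} actually carries an additional factor $\bigl(\tfrac{1}{2}\bigr)^{(q-1)/q}$ and is therefore a slightly sharper bound, not the same one, though this does not affect the validity of your derivation.
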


\newpage
The other type is given by the next theorem.
\begin{theorem}\label{enth}
 Let $\alpha>0$ and $\rho>0$. Let $\psi :\left[u^{\rho}, v^{\rho}\right] \subset \mathbb{R}_{+} \rightarrow \mathbb{R}^{\alpha}$ be a differentiable function on $(u^{\rho}, v^{\rho})$ where $\psi^{\prime} \in L_{1}[u,v]$ with  $0 \leq u<v$. For $q > 1$,  if $|\psi^{\prime}|^{q}$ is a grneralized $s$-convex on  $\left[u^{\rho}, v^{\rho}\right]$, we get
	\begin{flalign*}
 \bigg|\frac{\psi\left(u^{\rho}\right)+\psi\left(v^{\rho}\right)}{2}-\frac{\rho^{\alpha} \Gamma(\alpha+1)}{2\left(v^{\rho}+u^{\rho}\right)^{\alpha}}\left[^{\rho} I_{u+}^{\alpha} \psi\left(v^{\rho}\right)+^{\rho} I_{v-}^{\alpha} \psi\left(u^{\rho}\right)\right]\bigg|\nonumber&\leq \frac{\rho(v^{\rho}-u^{\rho})}{2}\bigg(\frac{1}{p(\rho-1)+1}\bigg)^{\frac{1}{p}}\\&\times\bigg[\frac{\beta_{\rho}(\alpha s +1,\alpha +1)}{\rho}+\frac{1}{\rho(\alpha s +\alpha +1)}\bigg]^{\frac{1}{q}}\\&\times\bigg(|\psi^{\prime}(u^{\rho})|^{q}+|\psi^{\prime}(v^{\rho})|^{q}\bigg)^{\frac{1}{q}},
\end{flalign*}	
	with $\frac{1}{p}+\frac{1}{q}=1$.
\end{theorem}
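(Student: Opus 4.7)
The plan is to parallel the proof of Theorem \ref{thss}, but to replace the power-mean inequality by H\"older's inequality with conjugate exponents $p,q$ satisfying $\tfrac{1}{p}+\tfrac{1}{q}=1$. Starting from Lemma \ref{lem}, I first rewrite the left-hand side as $\tfrac{\rho(v^{\rho}-u^{\rho})}{2}$ times $\bigl|\int_{0}^{1}[(1-t^{\rho})^{\alpha}-(t^{\rho})^{\alpha}]\,t^{\rho-1}\,\psi'(t^{\rho}u^{\rho}+(1-t^{\rho})v^{\rho})\,dt\bigr|$, and pull the absolute value inside via the triangle inequality.

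Next I apply H\"older's inequality, splitting the integrand as $t^{\rho-1}$ multiplied by $|(1-t^{\rho})^{\alpha}-(t^{\rho})^{\alpha}|\,|\psi'(\cdot)|$. The first factor, raised to $p$ and integrated, yields $\bigl(\int_{0}^{1}t^{p(\rho-1)}dt\bigr)^{1/p}=\bigl(\tfrac{1}{p(\rho-1)+1}\bigr)^{1/p}$, which is precisely the outer prefactor in the claimed bound. For the $q$-integral, I would use that $(t^{\rho})^{\alpha}$ and $(1-t^{\rho})^{\alpha}$ both lie in $[0,1]$ for $t\in[0,1]$, so that $|(1-t^{\rho})^{\alpha}-(t^{\rho})^{\alpha}|^{q}\leq(1-t^{\rho})^{\alpha}+(t^{\rho})^{\alpha}$ (since $q\geq 1$), and then invoke the generalized $s$-convexity of $|\psi'|^{q}$ to dominate the integrand by $[(1-t^{\rho})^{\alpha}+(t^{\rho})^{\alpha}]\bigl[(t^{\rho})^{\alpha s}|\psi'(u^{\rho})|^{q}+(1-t^{\rho})^{\alpha s}|\psi'(v^{\rho})|^{q}\bigr]$. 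Expanding this product yields four integrals which, after the substitution $u=t^{\rho}$ and the symmetry $\beta_{\rho}(a,b)=\beta_{\rho}(b,a)$, reduce in pairs to $\beta_{\rho}(\alpha s+1,\alpha+1)/\rho$ and to $1/[\rho(\alpha s+\alpha+1)]$, so that gathering the $|\psi'(u^{\rho})|^{q}$ and $|\psi'(v^{\rho})|^{q}$ coefficients produces the bracket $\bigl[\tfrac{\beta_{\rho}(\alpha s+1,\alpha+1)}{\rho}+\tfrac{1}{\rho(\alpha s+\alpha+1)}\bigr]$.

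The step that will require the most care is the bookkeeping between the H\"older split and the weight $t^{\rho-1}$: the naive split places all of $t^{\rho-1}$ into the $p$-factor, whereas the $\beta_{\rho}$ integrals in the stated bracket each carry a $t^{\rho-1}$ factor after the substitution $u=t^{\rho}$. I would therefore also consider the symmetric split $t^{\rho-1}=t^{(\rho-1)/p}\cdot t^{(\rho-1)/q}$, which gives the alternative prefactor $(1/\rho)^{1/p}$ and leaves the weight inside the $q$-integral where the $\beta_{\rho}$-formulas can be applied directly; comparing the two variants against the stated form will identify the intended distribution of the weight. Once the correct split is fixed, the remaining Beta-function computations and the grouping of the $|\psi'(u^{\rho})|^{q}+|\psi'(v^{\rho})|^{q}$ factor proceed exactly as in Theorems \ref{dth} and \ref{thss}.
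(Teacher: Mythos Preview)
Your approach is essentially identical to the paper's: start from Lemma~\ref{lem}, apply H\"older's inequality with the factor $t^{\rho-1}$ placed in the $p$-integral to obtain the prefactor $\bigl(p(\rho-1)+1\bigr)^{-1/p}$, then use generalized $s$-convexity of $|\psi'|^{q}$ and the Beta-function identities in the $q$-integral exactly as in Theorems~\ref{dth} and~\ref{thss}. The weight-distribution ambiguity you flag is real---the paper's own derivation writes $t^{\rho-1}$ in \emph{both} the $p$-factor and the $q$-factor (the ``naive split'' you describe), so your caution on this bookkeeping point is actually sharper than the published argument.
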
	
\begin{proof}
Using the Hölder's inequality, we obtain
	\begin{flalign}
\bigg| \int_{0}^{1}\left[\left(1-t^{\rho}\right)^{\alpha}-\left(t^{\rho}\right)^{\alpha}\right] t^{\rho-1} \psi^{\prime}\left(t^{\rho} u^{\rho}+\left(1-t^{\rho}\right) v^{\rho}\right) d t\bigg|\nonumber&\leq \bigg( \int_{0}^{1}(t^{\rho-1})^{p}dt\bigg)^{\frac{1}{p}} \\&\times\bigg(\int_{0}^{1}[(1-t^{\rho})^{\alpha}+(t^{\rho})^{\alpha}]t^{\rho-1} |\psi^{\prime}(t^{\rho}u^{\rho}+(1-t^{\rho})v^{\rho})|^{q} dt\bigg)^{\frac{1}{q}}\nonumber.
\end{flalign} 
The fact $|\psi^{\prime}|$ is a generalized $s$-convexity, and it can be used to obtain the following,
\begin{flalign}
\bigg| \int_{0}^{1}\left[\left(1-t^{\rho}\right)^{\alpha}-\left(t^{\rho}\right)^{\alpha}\right] t^{\rho-1} \psi^{\prime}\left(t^{\rho} u^{\rho}+\left(1-t^{\rho}\right) v^{\rho}\right) d t\bigg|\nonumber&\leq \bigg(\frac{1}{1+(\rho-1)p}\bigg)^{\frac{1}{p}}\nonumber \\&\times\bigg(\int_{0}^{1}[(1-t^{\rho})^{\alpha}+(t^{\rho})^{\alpha}]t^{\rho-1} [(t^{\rho})^{\alpha s} |\psi^{\prime}(u^{\rho})|^{q}\nonumber\\&+(1-t^{\rho})^{\alpha s} |\psi^{\prime}(v^{\rho})|^{q}] dt\bigg)^{\frac{1}{q}}\nonumber\\&\leq \bigg(\frac{1}{1+(\rho-1)p}\bigg)^{\frac{1}{p}} \nonumber\\&\times\bigg(|\psi^{\prime}(u^{\rho})|^{q} \int_{0}^{1}[t^{\rho-1}(1-t^{\rho})^{\alpha}(t^{\rho})^{\alpha s}+t^{\rho-1}(t^{\rho})^{\alpha}(t^{\rho})^{\alpha s}]dt\nonumber\\&\nonumber+|\psi^{\prime}(v^{\rho})|^{q}\int_{0}^{1}[t^{\rho-1}(1-t^{\rho})^{\alpha}(1-t^{\rho})^{\alpha s}+t^{\rho-1}(t^{\rho})^{\alpha}(1-t^{\rho})^{\alpha s}]dt\bigg)^{\frac{1}{q}}\\&=\bigg(\frac{1}{1+(\rho-1)p}\bigg)^{\frac{1}{p}} \nonumber\\&\times\bigg(|\psi^{\prime}(u^{\rho})|^{q}\bigg[\frac{\beta_{\rho}(\alpha s +1,\alpha +1)}{\rho}+\frac{1}{(\alpha (s+1)+1)\rho}\bigg] \nonumber\\&\nonumber+|\psi^{\prime}(v^{\rho})|^{q}\bigg[\frac{1}{\rho(\alpha (s+1)+1)}+\frac{\beta_{\rho}(\alpha +1,\alpha s +1)}{\rho}\bigg]\bigg)^{\frac{1}{q}}.
\end{flalign}
\end{proof}

\begin{corollary}
	\rm{From Theoremes \ref{dth}, \ref{thss} and \ref{enth}} for $q>1$, we obtain the following inequality,
		\begin{flalign*}
	\bigg|\frac{\psi\left(u^{\rho}\right)+\psi\left(v^{\rho}\right)}{2}-\frac{\rho^{\alpha} \Gamma(\alpha+1)}{2\left(v^{\rho}+u^{\rho}\right)^{\alpha}}\left[^{\rho} I_{u+}^{\alpha} \psi\left(v^{\rho}\right)+^{\rho} I_{v-}^{\alpha} \psi\left(u^{\rho}\right)\right]\bigg|\leq \min(M_1,M_2,M_3)\frac{(v^{\rho}-u^{\rho})}{2},
	\end{flalign*}	
	where
		\begin{flalign*}
	M_1=\rho\bigg(\frac{1}{\rho(\alpha +1)}\bigg)^{\frac{q-1}{q}}\bigg[\frac{{\beta}_{\rho}(\alpha s+1, \alpha+1)}{\rho}+\frac{1}{((s+1)\alpha +1)\rho}\bigg]^{\frac{1}{q}}(|\psi^{\prime}(u^{\rho})|^{q}+|\psi^{\prime}(v^{\rho})|^{q})^\frac{1}{q},&&
	\end{flalign*}
		\begin{flalign*}
	M_2=\bigg(\frac{1}{\rho}\bigg)^{\frac{q-1}{q}}\bigg[\frac{{\beta}_{\rho}(\alpha s+1, \alpha+1)}{\rho}+\frac{1}{\rho(\alpha (s+1) +1)}\bigg]^{\frac{1}{q}}(|\psi^{\prime}(u^{\rho})|^{q}+|\psi^{\prime}(v^{\rho})|^{q})^\frac{1}{q},&&
	\end{flalign*}
	and
		\begin{flalign*}
M_3=\rho \bigg(\frac{1}{1+(\rho-1)p}\bigg)^{\frac{1}{p}}\bigg[\frac{{\beta}_{\rho}(\alpha s+1, \alpha+1)}{\rho}+\frac{1}{(\alpha (s+1) +1)\rho}\bigg]^{\frac{1}{q}}(|\psi^{\prime}(u^{\rho})|^{q}+|\psi^{\prime}(v^{\rho})|^{q})^\frac{1}{q}.&&
\end{flalign*}	
\end{corollary}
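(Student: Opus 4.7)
The plan is to observe that this corollary is a direct consolidation of three previously established upper bounds, so the proof is essentially a bookkeeping argument. Under the common hypotheses (differentiability of $\psi$, $|\psi'|^q$ generalized $s$-convex, $q>1$, $\tfrac{1}{p}+\tfrac{1}{q}=1$), all three of Theorems \ref{dth}, \ref{thss} and \ref{enth} apply simultaneously, and each produces an upper bound for the same quantity
\[
E := \bigg|\frac{\psi(u^{\rho})+\psi(v^{\rho})}{2}-\frac{\rho^{\alpha}\Gamma(\alpha+1)}{2(v^{\rho}-u^{\rho})^{\alpha}}\bigl[{}^{\rho}I_{u+}^{\alpha}\psi(v^{\rho})+{}^{\rho}I_{v-}^{\alpha}\psi(u^{\rho})\bigr]\bigg|.
\]
Since $E$ is dominated by each individual bound, it is dominated by their minimum. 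That is the entire logical content of the statement.

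Concretely, first I would pull out from Theorem \ref{dth} the bound $E\le M_1\cdot\tfrac{v^{\rho}-u^{\rho}}{2}$, noting that its prefactor $\tfrac{\rho(v^{\rho}-u^{\rho})}{2}$ absorbs one factor of $\rho$ into $M_1$, producing the leading $\rho\bigl(\tfrac{1}{\rho(\alpha+1)}\bigr)^{(q-1)/q}$. Next I would pull from Theorem \ref{thss} the bound $E\le M_2\cdot\tfrac{v^{\rho}-u^{\rho}}{2}$, where the prefactor is already $\tfrac{v^{\rho}-u^{\rho}}{2}$ so $M_2$ starts with $\bigl(\tfrac{1}{\rho}\bigr)^{(q-1)/q}$. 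Finally, from Theorem \ref{enth} I would pull $E\le M_3\cdot\tfrac{v^{\rho}-u^{\rho}}{2}$, where the Hölder step contributes the factor $\rho\bigl(\tfrac{1}{1+(\rho-1)p}\bigr)^{1/p}$. In each case the inner bracket reduces to $\tfrac{\beta_{\rho}(\alpha s+1,\alpha+1)}{\rho}+\tfrac{1}{(\alpha(s+1)+1)\rho}$ after using the symmetry $\beta_{\rho}(\alpha+1,\alpha s+1)=\beta_{\rho}(\alpha s+1,\alpha+1)$ and combining the $|\psi'(u^{\rho})|^q$ and $|\psi'(v^{\rho})|^q$ terms under a common coefficient.

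Combining the three bounds gives $E\le\min(M_1,M_2,M_3)\cdot\tfrac{v^{\rho}-u^{\rho}}{2}$, which is the desired inequality. The main obstacle is purely notational: I must check carefully that the constants in the three theorem statements can indeed be rewritten in the symmetric form displayed for $M_1,M_2,M_3$, in particular that the two Beta-function contributions (one from the $u^{\rho}$-term, one from the $v^{\rho}$-term) really collapse to a single $\beta_{\rho}(\alpha s+1,\alpha+1)$ via the symmetry of the generalized Beta function, and that the residual term $\tfrac{1}{\rho(\alpha s+\alpha+1)}$ appearing in the proofs matches $\tfrac{1}{(\alpha(s+1)+1)\rho}$ in $M_1,M_2,M_3$. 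Once this alignment is verified, no further computation is required.
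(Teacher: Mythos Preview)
Your proposal is correct and matches the paper's treatment: the paper gives no proof at all for this corollary, regarding it as an immediate consequence of the three theorems, which is exactly the consolidation-of-bounds argument you outline. Your careful remarks about aligning the constants (the $\rho$ absorbed from the prefactor, the Beta-symmetry, and the identity $\alpha s+\alpha+1=\alpha(s+1)+1$) are more than the paper itself provides.
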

\section{Applications to the special means}
Using the results obtained, we examine some applications to
special means for non-negative numbers $u$ and $v$.

\begin{itemize}
	\item[1.] The arithmetic mean:\\
	$A=A(u,v)=\frac{u+v}{2}$; $u,v\in \mathbb{R},$ with $u,v>0.$
	\item[2.] The logarithmic mean:\\
	$L(u,v)=\frac{v-u}{\log v-\log u}$; $u,v\in \mathbb{R},$ with $u,v>0.$
	\item[3.] The generalized logarithmic mean:\\
	$L_r(u,v)=\bigg[\frac{v^{r+1}-u^{r+1}}{(v-u)(r+1)}\bigg]^{\frac{1}{r}}$; $r\in\mathbb{Z}\setminus\{-1,0\}$ $u,v\in \mathbb{R}$, with $u,v>0.$
	
\end{itemize}
\noindent
Using the results obtained in Section 2, and the above applications of means, we get the following propositiones.
\begin{proposition} Suppose that $n\in \mathbb{Z}$, $|r|\geq 2$ and $u,v\in \mathbb{R}$ where $0<u<v$. Then for $q\geq 1$, we get the following:
	\begin{flalign*}
\bigg|A(u^{r},v^{r})-L_r^{r}(u,v)\bigg|\leq \frac{(v-u)|r|}{2^{\frac{q-1}{q}+1}}A^{\frac{1}{q}}(|u|^{q(r-1)},|v|^{q(r-1)}).
\end{flalign*}
\end{proposition}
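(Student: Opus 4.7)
The plan is to specialize Corollary \ref{corol}(3) (equivalently, the $\rho=s=\alpha=1$ reduction coming out of Theorem \ref{dth}) to the power function $\psi(x)=x^{r}$ on the interval $[u,v]$. That corollary already has precisely the prefactor $\frac{1}{2}\big(\frac{1}{2}\big)^{(q-1)/q}$ appearing in the target inequality, which after combining becomes $2^{-((q-1)/q+1)}$, so the shape of the right-hand side will match automatically once the arithmetic is done.

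First I would verify that the hypotheses of Corollary \ref{corol}(3) apply. Since $\psi(x)=x^{r}$ is differentiable on $(0,\infty)$, I have $\psi'(x)=r x^{r-1}$ and hence $|\psi'(x)|^{q}=|r|^{q}x^{q(r-1)}$ on $[u,v]\subset(0,\infty)$. The required generalized $s$-convexity of $|\psi'|^{q}$ with $s=\alpha=1$ is ordinary convexity of $x\mapsto x^{q(r-1)}$. For $|r|\ge 2$ and $q\ge 1$ the exponent $q(r-1)$ is either $\ge q\ge 1$ (when $r\ge 2$) or $\le -q\le -1$ (when $r\le -2$); in both regimes the map $x\mapsto x^{q(r-1)}$ is convex on $(0,\infty)$, so the hypothesis is satisfied.

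Next I would identify both sides of the inequality with the advertised means. On the left,
\[
\frac{\psi(u)+\psi(v)}{2}=\frac{u^{r}+v^{r}}{2}=A(u^{r},v^{r}),\qquad \frac{1}{v-u}\int_{u}^{v}x^{r}\,dx=\frac{v^{r+1}-u^{r+1}}{(r+1)(v-u)}=L_{r}^{r}(u,v),
\]
so the left-hand side of Corollary \ref{corol}(3) equals $\bigl|A(u^{r},v^{r})-L_{r}^{r}(u,v)\bigr|$. On the right, substituting $|\psi'(u)|^{q}+|\psi'(v)|^{q}=|r|^{q}(|u|^{q(r-1)}+|v|^{q(r-1)})=2|r|^{q}A(|u|^{q(r-1)},|v|^{q(r-1)})$ into the bound and pulling the factor $|r|=(|r|^{q})^{1/q}$ out of the $1/q$-power yields
\[
\frac{v-u}{2}\Big(\tfrac{1}{2}\Big)^{(q-1)/q}\cdot|r|\cdot A^{1/q}\bigl(|u|^{q(r-1)},|v|^{q(r-1)}\bigr)=\frac{(v-u)|r|}{2^{(q-1)/q+1}}A^{1/q}\bigl(|u|^{q(r-1)},|v|^{q(r-1)}\bigr),
\]
which is precisely the claimed bound.

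There is no real obstacle in this proof; the only mild subtlety is checking convexity of $x^{q(r-1)}$ in the negative-$r$ regime, which is resolved by the observation above. The remainder is a direct substitution, so I expect the write-up to be just a short computation invoking Corollary \ref{corol}(3).
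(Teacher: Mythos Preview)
Your proposal is correct and follows exactly the paper's approach: apply Corollary \ref{corol}(3) to $\psi(x)=x^{r}$ and identify the left side with $A(u^{r},v^{r})-L_{r}^{r}(u,v)$ and the right side with the stated bound. Your additional care in checking convexity of $x\mapsto x^{q(r-1)}$ for $|r|\ge 2$ is a detail the paper omits, but the overall argument is the same.
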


\begin{proof}
	This follows from \rm{Corollary \ref{corol}(iii)} applied for $\psi(x)=x^{r}$, we get the required result.
\end{proof}

\begin{proposition} Suppose that $n\in \mathbb{Z}$, $|r|\geq 2$ and $u,v\in \mathbb{R}$, whereby $0<u<v$. Then for $q\geq 1$, we get the following:
	\begin{flalign*}
	\bigg|A(u^{r},v^{r})-L_r^{r}(u,v)\bigg|\leq \frac{(v-u)|r|}{2}A^{\frac{1}{q}}(|u|^{q(r-1)},|v|^{q(r-1)}).
	\end{flalign*}
\end{proposition}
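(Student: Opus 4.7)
The plan is to apply Corollary \ref{coq} directly to the power function $\psi(x)=x^{r}$, which is the natural counterpart of the previous proposition (which used Corollary \ref{corol}(iii) and carried the extra factor $(1/2)^{(q-1)/q}$). Since Corollary \ref{coq} is precisely the $\rho=1$, $s=1$, $\alpha=1$ specialisation of Theorem \ref{thss} and has no such factor, its application will produce exactly the bound claimed here.

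First I would check that the hypothesis of Corollary \ref{coq} is satisfied, namely that $|\psi'|^{q}$ is convex on the positive reals. For $\psi(x)=x^{r}$ one has $|\psi'(x)|^{q}=|r|^{q}\,x^{q(r-1)}$, and the exponent $q(r-1)$ is either $\geq q\geq 1$ (when $r\geq 2$) or $\leq -q\leq -1$ (when $r\leq -2$); in both cases the map $x\mapsto x^{q(r-1)}$ is convex on $(0,\infty)$, so the hypothesis holds on $[u,v]\subset(0,\infty)$.

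Next I would identify the three quantities appearing in Corollary \ref{coq}. The arithmetic-mean term $\tfrac{\psi(u)+\psi(v)}{2}$ is exactly $A(u^{r},v^{r})$; the integral average $\tfrac{1}{v-u}\int_{u}^{v}x^{r}\,dx$ evaluates to $\tfrac{v^{r+1}-u^{r+1}}{(v-u)(r+1)}=L_{r}^{r}(u,v)$; and the right-hand factor satisfies
\[
\left(\frac{|\psi'(u)|^{q}+|\psi'(v)|^{q}}{2}\right)^{1/q}
=|r|\left(\frac{|u|^{q(r-1)}+|v|^{q(r-1)}}{2}\right)^{1/q}
=|r|\,A^{1/q}\bigl(|u|^{q(r-1)},|v|^{q(r-1)}\bigr).
\]
Substituting these three identifications into Corollary \ref{coq} yields the stated inequality immediately, with the factor $\tfrac{(v-u)|r|}{2}$ coming from pulling $|r|$ out of the mean.

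The only subtlety to guard against is the convexity check for negative $r$, since the statement allows $|r|\geq 2$ rather than $r\geq 2$; this is why I would verify both signs of $r$ explicitly in the first step. After that, there are no real obstacles: the proof is a one-line substitution into Corollary \ref{coq}, in complete parallel with the proof of the preceding proposition.
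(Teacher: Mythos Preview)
Your proposal is correct and follows exactly the paper's approach: the paper's proof consists of a single line invoking Corollary \ref{coq} for the power function $\psi(x)=x^{r}$ (written there as $x^{n}$, evidently a typo). Your added verification of the convexity hypothesis and the identification of the three terms are appropriate elaborations of that one-line argument.
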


\begin{proof}
	This follows from \rm{Corollary \ref{coq}} applied for $\psi(x)=x^{n}$, we get the required result.
\end{proof}

\begin{proposition} Suppose that  $u,v\in \mathbb{R}$, where $0<u<v$. Then for $q\geq 1$, we get
	\begin{flalign*}
	\bigg|A(u^{-1},v^{-1})-L(u,v)\bigg|\leq \frac{(v-u)|}{2^{\frac{q-1}{q}}+1}A^{\frac{1}{q}}(|u|^{-2q},|v|^{-2q}).
	\end{flalign*}
\end{proposition}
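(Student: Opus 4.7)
The plan is to obtain this proposition as a direct specialization of Corollary \ref{corol}(iii) to the function $\psi(x)=1/x$ on $[u,v]\subset(0,\infty)$, exactly parallel to how the two preceding propositions were derived from $\psi(x)=x^{r}$.

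First, I verify the hypotheses of Corollary \ref{corol}(iii) for $\psi(x)=1/x$: the function is differentiable on $(u,v)$ with $\psi'(x)=-1/x^{2}$, so $|\psi'(x)|^{q}=x^{-2q}$ is convex on $(0,\infty)$ for every $q\geq 1$ and is therefore generalized $s$-convex in the sense used (at $s=\alpha=1$). Integrability of $\psi'$ on $[u,v]$ is immediate from $0<u<v$.

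Next, I identify both sides of the corollary with the quantities in the proposition. On the left, $\tfrac{1}{2}(\psi(u)+\psi(v))=\tfrac{1}{2}(u^{-1}+v^{-1})=A(u^{-1},v^{-1})$, while $\tfrac{1}{v-u}\int_{u}^{v}\psi(x)\,dx=\tfrac{\ln v-\ln u}{v-u}=L(u,v)^{-1}$, so the left-hand difference matches the one in the proposition (the statement writes $L(u,v)$ in place of $L(u,v)^{-1}$, which I read as a typographical slip). On the right, the corollary's bound is $\tfrac{v-u}{2}(1/2)^{(q-1)/q}\bigl((|\psi'(u)|^{q}+|\psi'(v)|^{q})/2\bigr)^{1/q}$; substituting $|\psi'(x)|^{q}=x^{-2q}$ turns the averaging bracket into $A(|u|^{-2q},|v|^{-2q})$, and the leading constant collapses to $(v-u)/2^{1+(q-1)/q}$, producing exactly the claimed bound once the denominator in the proposition is read as $2^{(q-1)/q+1}$ rather than the apparent misprint $2^{(q-1)/q}+1$.

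The main obstacle is therefore notational rather than analytic: the only work is to align the typeset form of the proposition with the specialization produced by substituting $\psi(x)=1/x$ into the corollary, in particular reconciling the exponent of $2$ in the denominator and identifying $L(u,v)^{-1}$ in place of $L(u,v)$. Once those identifications are made, the stated inequality is immediate from Corollary \ref{corol}(iii) with no further estimates required.
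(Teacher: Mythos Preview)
Your proposal is correct and follows exactly the paper's own approach: the paper's proof is the single line ``This follows from Corollary~\ref{corol}(iii) applied for $\psi(x)=\frac{1}{x}$.'' Your additional remarks about the typographical slips ($L(u,v)$ versus $L(u,v)^{-1}$ and the denominator $2^{(q-1)/q}+1$ versus $2^{(q-1)/q+1}$) are accurate and in fact make your write-up more careful than the original.
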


\begin{proof}
	This follows from \rm{Corollary \ref{corol}(iii)} applied for $\psi(x)=\frac{1}{x}$, we get the required result.
\end{proof}

\begin{proposition} Suppose that  $u,v\in \mathbb{R}$, where  $0<u<v$. Then for $q\geq 1$, we get
	\begin{flalign*}
	\bigg|A(u^{-1},v^{-1})-L(u,v)\bigg|\leq \frac{(v-u)|}{2}A^{\frac{1}{q}}(|u|^{-2q},|v|^{-2q}).
	\end{flalign*}
\end{proposition}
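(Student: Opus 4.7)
The plan is to specialize Corollary \ref{coq} (itself the $\rho=s=\alpha=1$ case of Theorem \ref{thss}) to the choice $\psi(x)=1/x$ on the interval $[u,v]\subset(0,\infty)$. This parallels the strategy of Proposition 3, where Corollary \ref{corol}(iii) was applied to the same function; the only change is that we draw our bound from a different corollary, which simply replaces the $(1/2)^{(q-1)/q}$ factor by $1$.

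First I would confirm that the hypotheses are satisfied: $\psi(x)=1/x$ is differentiable on $(u,v)$ with $\psi'(x)=-1/x^{2}$, and the map $x\mapsto x^{-2q}=|\psi'(x)|^{q}$ is convex on $(0,\infty)$ for every $q\geq 1$. With $\rho=\alpha=s=1$ the hypothesis ``$|\psi'|^{q}$ generalized $s$-convex'' collapses to ordinary convexity, so Corollary \ref{coq} applies.

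Next I would substitute and identify the three ingredients appearing in the corollary against the quantities in the statement. The boundary-average term gives $\tfrac{1}{2}(\psi(u)+\psi(v))=\tfrac{1}{2}(u^{-1}+v^{-1})=A(u^{-1},v^{-1})$. The integral term gives $\tfrac{1}{v-u}\int_{u}^{v}x^{-1}\,dx=\tfrac{\ln v-\ln u}{v-u}$, which is identified with $L(u,v)$ under the same convention already used in Proposition 3. Finally, the derivative average on the right becomes $\bigl(\tfrac{u^{-2q}+v^{-2q}}{2}\bigr)^{1/q}=A^{1/q}(|u|^{-2q},|v|^{-2q})$. Inserting these three identifications into the bound from Corollary \ref{coq} yields exactly the claimed inequality.

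No genuine obstacle arises; the argument is a one-line substitution. The only subtlety is notational: under the definition $L(u,v)=(v-u)/(\ln v-\ln u)$ given in Section 3, the integral $\tfrac{1}{v-u}\int_{u}^{v}dx/x$ is strictly $L(u,v)^{-1}$ rather than $L(u,v)$, so I would adopt the same convention that is implicit in the statement of Proposition 3 so that the displayed formula reads consistently with the rest of the section.
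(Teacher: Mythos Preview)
Your proposal is correct and follows exactly the paper's own approach: the paper simply says ``This follows from Corollary~\ref{coq} applied for $\psi(x)=\frac{1}{x}$,'' and your write-up is just a more detailed execution of that same substitution. Your observation about the $L(u,v)$ versus $L(u,v)^{-1}$ convention is accurate and is indeed implicit (and unaddressed) in the paper itself.
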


\begin{proof}
	This follows from \rm{Corollary \ref{coq} } applied for $\psi(x)=\frac{1}{x}$, we get the required result.
\end{proof}

\end{document}